\RequirePackage{tikz}
\documentclass{article}
\pdfoutput=1
\newif\iflong
\longtrue % for version on the Web
\usepackage[utf8]{inputenc}
\usepackage{amsmath}
\usepackage{amssymb}
\usepackage{amsthm}
\usepackage{amsfonts}
\usepackage{tikz,tikz-3dplot}
\usepackage{epsfig}
\usepackage{url}
\usepackage[all,knot,poly]{xy}
\tdplotsetmaincoords{70}{115}
\usepackage{todonotes}
\usetikzlibrary{matrix}
\def\R{\mbox{$\mathbb{R}$}}
\newtheorem{definition}{Definition}[section]
\newtheorem{lemma}[definition]{Lemma} %[section]
\newtheorem{remark}[definition]{Remark} %[section]
\newtheorem{example}[definition]{Example} %[section]
\usepackage{tikz-cd}

\newcommand\look[1]{{\bf #1}}

\newcommand\ForAuthors[1]%          %  temporary remark for the
 {\par\smallskip                     %  authors:
  \begin{center}%                    %
   \fbox%                            %    --------
   {\parbox{0.9\linewidth}%          %    |  #1  |
    {\raggedright--- #1}%         %    --------
   }%                                %
  \end{center}%                      %
  \par\smallskip                     %
 }

\tikzset{
  curarrow/.style={
  rounded corners=8pt,
  execute at begin to={every node/.style={fill=red}},
    to path={-- ([xshift=-50pt]\tikztostart.center)
    |- (#1) node[fill=white] {$\scriptstyle \partial_*$}
    -| ([xshift=50pt]\tikztotarget.center)
    -- (\tikztotarget)}
    }
}
\newcounter{nodemaker}
\definecolor{mred}{rgb}{0.7,0.1,0.1}
\definecolor{mblue}{rgb}{0,0,0.8}
\definecolor{mgreen}{rgb}{0,0.6,0.3}

\newcommand\N[0]{\mathbb{N}}

\def\Z{\mbox{$\mathbb{Z}$}}

\newcommand\functor[1][l]{\csname#1functor\endcsname}
\newcommand\lfunctor[3]{%
  \setbox0=\hbox{$#2$}%
  \kern\wd0%
  \ensurestackMath{\Centerstack[c]{#1\\ \mathllap{#2\;\,}\mathclap{\DownArrow}\\#3}}%
}
\newcommand\rfunctor[3]{%
  \setbox0=\hbox{$#2$}%
  \ensurestackMath{\Centerstack[c]{#1\\\mathclap{\DownArrow}\mathrlap{\,\;#2}\\#3}}%
  \kern\wd0%
}

\newcommand\DownArrow{\rotatebox[origin=c]{-90}{$\longrightarrow$\,}}

\title{Directed homological and cohomological operations}
\author{Eric Goubault }
\date{\today}

\begin{document}

\maketitle

\begin{abstract}
In this short note, we present a persistence module approach to directed cohomology, dual to the directed homology introduced in \cite{directedpersistencemodule}. We lay out the first properties of directed cohomology and in particular of cohomological operations, partially linked to some homological operations. We treat here both the case of a specific class of precubical sets and of general directed spaces.
%as well as the case of general directed spaces.  
\end{abstract}

\section{Introduction}

Directed homology has been long studied \cite{concur92,directedpersistencemodule,grandis2004,fahrenberg2004directed,patchkoria2006,dubut2017} but very few has been developed concerning homological operations and cohomological operations. In a classical setup, cap and cup products play an important role, giving finer control on these topological invariants. 

We build on the directed homology introduced in \cite{directedpersistencemodule}, to derive two main operations. One is induced by the concatenation of directed paths and higher paths, and leads both to a homological and cohomological operation. The other one is induced by the classical cup product on the ``local cohomology ring" of the traces spaces, and lives only in directed cohomology. 

In this short note, we pave the way towards the full study of these operations. More has to be studied, in particular the interaction between the two directed cohomological operations and on practical calculations. 

\paragraph{Contents}

We first recap in Section \ref{sec:backgrounddspace} and in Section 
\ref{sec:background} the basic concepts and definitions we will need concerning directed spaces and their combinatorial counterparts, a certain class of precubical sets. 

We then define coboundary maps in 
Section \ref{sec:coboundarymaps}, by dualizing the construction of \cite{directedpersistencemodule}, which allows us to defined directed cohomology bimodules, over the path algebra in 
Section \ref{sec:precubcohomology}. On a particular class of precubical sets, we are able to construct in Section \ref{sec:directedoperationscub}
two cohomological operations: $\curvearrowright$ and $\smile$, $\curvearrowright$ being the dual of a homology operation $\circledast$ generated by the concatenation operation of cube chains. 

We describe more briefly the corresponding construction in the more general situation of directed spaces, see 
Section \ref{sec:dspace}. 

Finally we give some calculations of these operations on directed cohomological bimodules in 
Section \ref{sec:examples}, building from the results of \cite{Barychnikov}.

%(also feasible for d-spaces)

\section{Background on directed spaces}

\label{sec:backgrounddspace}
 We first recap the notion of $d$-space \cite{thebook,grandisbook}: % which is one of the most general models for directed topology: 

\begin{definition}
A {directed space} is a pair $\Vec{X} = (X, dX)$ where $X$ is a topological space and $dX$ is a set of continuous paths in $X$, called the \textbf{directed paths} %(or \textbf{dipaths})
of the space, such that;
\begin{enumerate}
    \item[(i)] Every constant path is a directed path
    \item[(ii)] Every increasing reparametrization of a directed path is a directed path i.e. for every directed path $\gamma \in dX$ and for any function $\varphi : [0,1] \to [0,1]$ which is continuous and increasing, $\gamma \circ \varphi \in dX$  
    \item[(iii)] For any directed paths $\gamma$ and $\delta$ so that $\gamma(1) = \delta(0)$, the concatenation $\gamma \cdot \delta$ of $\gamma$ and $\delta$ is a directed path
\end{enumerate}
\end{definition}

One simple example of a directed space is the closed interval $[0,1]$ with increasing function $[0,1] \to [0,1]$ as directed paths. This space is the {directed unit interval}.

Morphisms of directed spaces are defined as follows.
\begin{definition}
Let $\Vec{X} = (X, dX)$ and $\Vec{Y} = (Y, dY)$ be two directed spaces. A continuous function $f : X \to Y$ is called a {directed map}, or {dimap}, when:
\[\forall \gamma \in dX, \quad 
f \circ \gamma \in dY\]

\end{definition}

A dimap from the directed unit interval to a directed space $\Vec{X}$ is called a {dipath}. The dipaths of $\Vec{X}$ are exactly its directed paths.

Directed spaces together with dimaps (and composition of functions) form the category of directed spaces. %An isomorphism in this category is called a \textbf{dihomeomorphism}. 

In a given directed space, the set of paths from one point to another is endowed with the compact-open topology.

\begin{definition}
Let $\Vec{X} = (X, dX)$ be a directed space and $x,y \in X$. 

The set of all directed paths $\gamma \in dX$ so that 
\[\gamma(0) = x \textit{ and } \gamma (1) = y\]
together with the compact-open topology is called the {path space} from $x$ to $y$ and is denoted by ${P(\Vec{X})_x^y}$ 
\end{definition}

We consider the relation $\sim$ ``being the same up to bijective increasing repara\-metrization'' defined by
$\gamma \sim \delta$  iff there exists a continuous increasing bijective function 
\[f : [0,1] \to [0,1]\]
\noindent such that
\[\delta = \gamma \circ f \]
is indeed an equivalence relation.

\begin{definition}
Given a directed space $\Vec{X} = (X, dX)$, a {trace} from $x \in X$ to $y \in X$ is an equivalence class of paths from $x$ to $y$ for the equivalence relation here denoted by $\sim$.
If $\gamma$ is a dipath, we will denote by ${[\gamma]}$ the corresponding trace.
\end{definition}

This allows us to define concatenation $*$ on traces, and this gives rise to a category of traces. We denote by $Tr(\Vec{X})$ the trace space of $\Vec{X}$, and $Tr(\Vec{X})_\alpha^\beta$ the subspace of $Tr(\Vec{X})$ consisting of traces from points $\alpha$ to $\beta$ in $X$. 

\section{Background on precubical sets and cube chains}

\label{sec:background}

Our results will apply to a particular class of precubical sets, that has been introduced in \cite{cubechains}: 

\begin{definition}
Let $X$ be a finite precubical set. We say that $X$ is \look{proper} if the map: 
$$
\bigcup\limits_{n\geq 0} X_n : \ c \rightarrow \{d^0(c),d^1(c)\} \in 2^{X_0}$$
\noindent is an injection, i.e., the cubes of $X$ can be distinguished by its start and end vertices. 

The \look{non-looping length covering} of $X$ is the precubical set $\tilde{X}$ with $\tilde{X}_n=X_n\times \Z$ and $d^\epsilon_i(c,k)=(d^\epsilon_i(c),k+\epsilon)$.
\end{definition}

We write ${Cub}$ for the full sub-category of $Precub$, of finite precubical sets with proper non-looping length covering. 

%\todo[inline]{Mettre $X\in {Cub}$ etc. dans tout le reste a la place de "finite" etc.}

The following definition is essential in the calculations on trace spaces done by Krzysztof Ziemianski \cite{cubechains}. We twist a little the original definition so that 1-cube chains can be empty, and can consist in that case of constant paths on vertices, this convention will prove easier for the rest of the paper. 

\begin{definition}
Let $X$ be a pre-cubical set and let $v$, $w \in X_0$ be two of its vertices. A \look{cube chain} in $X$ from $v$ to $w$ is a sequence of cubes indexed by $v$ and $w$, $c = (c_1,\ldots,c_l)_{v,w}$, where $c_k \in X_{n_k}$ and $n_k > 0$, $l\geq 0$, such that either $l=0$ and $v=w$ or: 
\begin{itemize}
    \item $d^0(c_1) = v$,
\item $d^1(c_l)=w$,
\item $d^1(c_i) = d^0(c_{i+1})$ for $i = 1,\ldots, l - 1$.
\end{itemize}
We will often leave out the index $v,w$ to cube chains when the context makes it clear what they are. 
\end{definition}

The sequence $(n_1,\ldots,n_l)$ will be called the \look{type of a cube chain} $c$, $dim(c) = n_1 + \ldots + n_l - l$ the \look{dimension} of $c$, and $n_1 + \ldots + n_l$, the \look{length} of $c$. By convention, we set $dim(()_{v,v})=0$: the dimension of constant paths on a vertex is zero. These cube chains in $X$ from $v$ to $w$ will be denoted by $Ch(X)^w_v$, and the set of cube chains of dimension equal to $m$ (resp. less than $m$, less or equal to $m$) by $Ch^{=m}(X)^w_v$ (resp. $Ch^{<m}(X)^w_v$, $Ch^{\leq m}(X)^w_v$). Note that a cube chain has dimension 0 if and only if it contains 1-cubes only, or is an empty cube chain. 

\begin{remark}
\label{rem:0cubechain}
Cube chains of dimension 0 are naturally identified with directed paths (including the constant paths on vertices) of the underlying quiver of the pre-cubical set. 
\end{remark}

\begin{remark}
\label{rem:1cubechain}
1-cube chains are necessarily of the form $(c_1,\ldots,c_l)$ with a unique $c_i$ of dimension 2, all the others being of dimension 1. Indeed, each cell of dimension strictly greater than 1 contributes to adding to the dimension of the cube chain. 
\end{remark}

\begin{remark}
\label{rem:dimension}
We note that the dimension of cube chains is additive in the following sense: let $c=(c_1,\ldots,c_l)$ and $d=(d_1,\ldots,d_k)$ be two cube chains. Their concatenation, when defined, i.e. when $d^1(c_l)=d^0(d_1)$, is the cube chain $e=(c_1,\ldots,c_l,d_1,\ldots,d_k)$. The dimension of $e$ is the sum of the dimensions of each cell $c_i$ and $d_j$ minus $k+l$, hence is equal to the sum of the dimension of $c$ with the dimension of $d$. 
\end{remark}

For a cube chain $c = (c_1,\ldots,c_l) \in Ch(X)^w_v$ of type $(n_1,\ldots,n_l)$ and dimension $i=\sum\limits_{j=1}^l n_i-l$, an integer $k \in \{1,\ldots,l\}$ and a subset $\mathcal{I} \subseteq \{0,\ldots,n_k-1\}$ having $r$ elements, where $0 < r < n_k$, define a cube chain (as done in \cite{cubechains}):
$$
d_{k,\mathcal{I}}(c) = (c_1,\ldots,c_{k-1},d^0_{\overline{\mathcal{I}}}(c_k),d^1_\mathcal{I}(c_k),c_{k+1},\ldots,c_l) \in Ch(X)^w_v$$ 
\noindent where $\overline{\mathcal{I}} = \{0,\ldots,n_k-1\} \backslash \mathcal{I}$. % and $d^k_B$ is defined as follows: 
This defines a cell complex with cells of dimension $i$, $Ch^{=i}(X)$, being $i$-cube chains, and with faces given by the $d_{k,\mathcal{I}}$.

For $R$ a given commutative field we define: 

\begin{definition}
\label{def:modcub}
Let $X$ be a precubical set. We call {${R_{i+1}[X]}$} for $i\geq 0$ the $R$-module generated by all cube chains of dimension $i$. %, and $R_1[X]$ be the free $R$-vector space generated by all cube chains of dimension 0 plus the vertices of $X$.    
\end{definition}

The formula below is given in %defined in 
\cite{cubechains}: 

\begin{definition}
\label{def:boundaryoperator}
Define a \look{boundary map} from the %$R$-vector space 
$R_{i+1}[X]$ 
%generated by $i$-cube chains of $X$ to the $R$-vector space 
to $R_i[X]$ %generated by $(i-1)$-cube chains of $X$  %as a linear combinations of such $d_{k,A}$ above: 
as follows: 
%We refer the reader to \cite{Ziemianski} for more details. 

\begin{multline*}
\partial c = \sum\limits_{k=1}^l \sum\limits_{r=1}^{n_k-1} \sum\limits_{\mbox{\tiny $\mathcal{I}$} \subseteq\{0,\ldots,n_k-1\}: \ \mid \mbox{\tiny $\mathcal{I}$} \mid=r}
(-1)^{n_1+\ldots+n_{k-1}+k+r+1} sgn(\mbox{$\mathcal{I}$}) {d_{k,\mbox{$\mathcal{I}$}}(c)}
\end{multline*}
\noindent where
$$
sgn(\mathcal{I})=\left\{\begin{array}{ll}
1 & \mbox{if $\sum\limits_{i\in \mathcal{I}} i \equiv \sum\limits_{i=1}^r i \ mod \ 2$} \\
-1 & \mbox{otherwise}
\end{array}\right.
$$
%\todo[inline]{A ecrire plus en detail}
Then $R_*[X]=(R_{i+1}[X],\partial)_{i\in \N}$ is a chain complex. The restriction to cube chains from any $v \in X_0$ to any $w \in X_0$ is a sub-chain complex of $R_*[X]$ that we write $R^w_{{v},*}[X]$.
\end{definition}

\section{Coboundary maps}
\label{sec:coboundarymaps}

The boundary maps $\partial$ that allow for defining a directed homology theory in \cite{directedpersistencemodule} are not only $R$-module maps, but also $R[X]$-bimodule maps, where $R[X]$ is the path algebra on the underlying quiver of $X$ (see \cite{directedpersistencemodule}). 

Now consider the following construction. Let $R^*_n[X]$ be the $R$-module $$Hom_{{ }_R Mod}(R_n[X],R)$$ 
\noindent which is the direct sum of $R$-modules $Hom_{{ }_R Mod}(R_n[X]^\beta_\alpha,R)$ for all $(\alpha,\beta)\in R_0[X]$, where $R_0[X]$ denotes the set of pairs of reachable states here.

We now define the coboundary map $\partial^*: \ R_i^*[X] \rightarrow R_{i+1}^*[X]$ to be, for $f \in R_i^*[X]$:
$$
\partial^*(f) = f \circ \partial
$$
\noindent which is an element of $R_{i+1}^*[X]$ since $\partial: \ R_{i+1}[X] \rightarrow R_i[X]$.

Consider $R[X]^{op}$ the opposite algebra of $R[X]$, i.e. the algebra which has the same underlying $R$-module as $R[X]$ but has as internal multiplication $\times^*$ the opposite internal multiplication $\times$ of $R[X]$: for $p$ and $q$ in $R[X]^{op}$, $p\times^* q=q\times p$. Now, define a $R[X]^{op}$-bimodule structure on $R_n^*[X]$ by, for $p$, $q$ in $R[X]^{op}$ and $f \in R_n^*[X]$, $x \in R_n[X]$: 
$$
p\bullet f \bullet q (x)=f(p\bullet f(x) \bullet q) 
$$
Indeed we have: 
$$\begin{array}{rcl}
p'\bullet (p\bullet f \bullet q)\bullet q'(x) & = & p\bullet f(p'\bullet x \bullet q') \bullet q\\
& = & f(p\bullet (p'\bullet x \bullet q')\bullet q) \\
& = & f((p\times p')\bullet x \bullet (q'\times q) \\
& = & f((p'\times^* p)\bullet x \bullet (q\times^* q')) \\
& = & (p'\times^* p) \bullet f \bullet (q\times^* q') (x)
\end{array}
$$

Indeed, the coboundary map we defined is a morphism of $R[X]^{op}$-bimodules: 

$$
\begin{array}{rcl}
\partial^*(p\bullet f \bullet q)(x) & = & (p\bullet f \bullet q)(\partial(x)) \\
& = & f(p\bullet \partial(x) \bullet q) \\
& = & f(\partial(p\bullet x \bullet q)) \\
& = & p\bullet (f \circ \partial) \bullet q(x) \\
& = & p \bullet \partial^*(f) \bullet q (x)
\end{array}
$$

\begin{remark}
For $\rho \in {R^*_n[X]}_\alpha^\beta$ (which is $Hom_{{ }_X Mod}({R_n[X]}_\alpha^\beta,R)$), and $p \in {R[X]^{op}}_\alpha^{\alpha'}$, $q \in {R[X]^{op}}_{\beta'}^\beta$, $p\bullet \rho \bullet q \in {R^*_n[X]}_{\alpha'}^{\beta'}$.
\end{remark}

\section{Directed Cohomology bimodules of certain precubical sets}

\label{sec:precubcohomology}

We can now define the cohomology $R[X]^{op}$-bimodule of a precubical set:

\begin{definition}
\label{def:cohomology}
The \look{cohomology modules} of a finite precubical set $X$ is defined as the cohomology $(HM^{i+1})_{i\geq 0}[X]$ in the abelian category of $R[X]^{op}$-bimodules, of the complex of $R[X]^{op}$-bimodules %\todo{Be careful with $HM^1[X]$ (similar to homology)} 
given by the coboundary operators $\partial^*: \ R_i^*[X]\rightarrow R_{i+1}^*[X]$: 
$$
HM^{i+1}[X]=Ker \ \partial^*_{\mid R^*_{i+1}[X]}/Im \ \partial^*_{\mid R^*_{i}[X]}
$$
\end{definition}

\begin{remark}
Note that for $i=0$, $Im \ \partial^*_{| R^*_i[X]}=0$ in the definition above. 
\end{remark}

By Lemma 5.7 of \cite{directedpersistencemodule}, 
the quotient 
$HM^{i+1}[X]=Ker \ \partial^*_{\mid R^*_{i+1}[X]}/Im \ \partial^*_{\mid R^*_{i}[X]}$ can be identified, as an $R$-module, with the coproduct of all $R$-modules  
$$Ker \ \partial^*_{R^*_{i+1}[X]_a^b}/Im \ \partial^*_{R^*_{i}[X]_a^b}$$ 
\noindent that we denote by $HM^{i+1}[X]^b_a$. 
These cohomology modules characterize the cohomology of the trace spaces, as shown below: 

\begin{lemma}
\label{lem:homtrace}
Let $X$ be a finite precubical set
that is such that is has proper non-looping length covering, $a, b \in X_0$. Then
the $R$-module $HM^n[X]^b_a$, $n \geq 1$, is the standard $(n-1)$th cohomology of the trace space $Tr(| X |)^b_a$ from $a$ to $b$. 
\end{lemma}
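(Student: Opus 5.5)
The plan is to reduce the statement to the corresponding homological fact and then dualize over the field $R$. The key input is Ziemia\'nski's theorem from \cite{cubechains}. For $X$ finite with proper non-looping length covering, the cube chain cell complex $Ch(X)^b_a$, with cells the cube chains and faces the $d_{k,\mathcal{I}}$, has a geometric realization homotopy equivalent to $Tr(|X|)^b_a$. Its cellular chain complex, shifted by one, is exactly $R^b_{a,*}[X]$ with the boundary $\partial$ of Definition \ref{def:boundaryoperator}. This is the fact used in \cite{directedpersistencemodule} to show that $HM_{n}[X]^b_a$ is the $(n-1)$th homology of the trace space. So we get a natural isomorphism $H_{n}(R^b_{a,*}[X]) \cong H_{n-1}(Tr(|X|)^b_a;R)$.

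The next step is to identify the cochain complex with the dual of this chain complex. The first task is to check that restricting to the $(a,b)$ component is compatible with $\partial^*$: since $\partial$ preserves endpoints of cube chains, $R^*_{n}[X]^b_a = Hom_R(R_n[X]^b_a,R)$ and $\partial^*$ maps this summand to itself. Hence $HM^{n}[X]^b_a$ is the cohomology of the complex $Hom_R(R^b_{a,*}[X],R)$ in degree $n$. Since $X$ is finite and non-looping, there are only finitely many cube chains from $a$ to $b$. So each $R_n[X]^b_a$ is a finite-dimensional free $R$-module and this Hom complex is precisely the cellular cochain complex of $Ch(X)^b_a$, with the same index shift.

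Then I apply the universal coefficient theorem over the field $R$. The Ext term vanishes, so $H^n(Hom_R(C_*,R)) \cong Hom_R(H_n(C_*),R)$. Combined with the first paragraph, this gives $HM^n[X]^b_a \cong Hom_R(H_{n-1}(Tr(|X|)^b_a;R),R)$. For the same reason, applied now to singular chains of the trace space, which is homotopy equivalent to a finite CW complex, the right-hand side is $H^{n-1}(Tr(|X|)^b_a;R)$. Alternatively one can avoid the second use of universal coefficients by citing directly that cellular cohomology of the finite CW complex $|Ch(X)^b_a|$ agrees with singular cohomology, and that singular cohomology is a homotopy invariant.

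The main obstacle is the first paragraph. We must make sure that the chain complex $R^b_{a,*}[X]$, with the sign conventions above and the convention that empty and $0$-dimensional chains sit in degree $1$, really is the cellular chain complex of a CW (or regular cell) complex homotopy equivalent to the trace space. In particular, the degree-$1$ part must correctly produce $H^0$, so the convention $dim(()_{v,v})=0$ for $a=b$ has to be checked. I would handle this by quoting Ziemia\'nski's result together with Lemma 5.7 of \cite{directedpersistencemodule}, where the same identification was made for homology. The cohomological statement then follows purely algebraically as above.
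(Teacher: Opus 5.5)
Your proposal is correct and follows the paper's route: you identify $HM^*[X]^b_a$ with the cellular cochain complex of Ziemia\'nski's cube chain complex $Ch(X)^b_a$ and invoke Theorem 1.7 of \cite{cubechains}, and your universal-coefficient step over the field $R$ just spells out what the paper leaves implicit. One small slip, which does not affect the argument: the hypothesis only asks that the length covering $\tilde{X}$ be proper, so $X$ itself may have loops (e.g.\ a single vertex with a loop edge), and then there can be infinitely many cube chains from $a$ to $b$; however, nothing you use needs finite type, since universal coefficients over a field and the agreement of cellular with singular cohomology hold for arbitrary complexes.
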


\begin{proof}
$HM^*[X]_a^b$ as defined above, is the cohomology of the cochain complex $Ch(X)^b_a$, and by Theorem 1.7 of \cite{cubechains}, this is isomorphic to the (singular) cohomology of the trace space $Tr(\mid X \mid)^b_a$.
\end{proof}

\section{Directed Homology operations}

\label{sec:precuboperations}

There is a tensor product between $R_i[X]$ and $R_j[X]$ as follows. For $c=(c_1,\ldots,c_n)$ an $(i+1)$-cube chain of $X$ from $\alpha$ to $\beta$, $d=(d_1,\ldots,d_m)$ an $(j+1)$-cube chain from $\beta$ to $\gamma$, we define: 

$$
c\otimes d=(c_1,\ldots,c_n,d_1,\ldots,d_m)
$$

When cube chains $c$ and $d$ do not have matching end points, we define $c\otimes d$ to be equal to 0. Then $\otimes$ is extended to $R_i[X]\times R_j[X]$ by bilinearity. We note that $\otimes$ then maps $R_i[X]\times R_j[X]$ to $R_{i+j-1}[X]$. The tensor product is just concatenation of higher cube paths, extending the concatenation operation $\bullet$. 

We note now the following:

\begin{lemma}
\label{lem:diffalg}
    The boundary operator $\partial$ has the property that, for any $c$ $(i+1)$-cube path and $d$ any $(j+1)$-cube path:
$$
\partial(c\otimes d) = \partial(c)\otimes d +(-1)^{|c|} c\otimes \partial(d)
$$
\end{lemma}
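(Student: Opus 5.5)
The plan is to compare the two sides directly, term by term, using the explicit formula for $\partial$ in Definition \ref{def:boundaryoperator}. Both sides are bilinear, so it suffices to treat basis elements: $c=(c_1,\ldots,c_n)$ of type $(n_1,\ldots,n_n)$ and $d=(d_1,\ldots,d_m)$ of type $(n'_1,\ldots,n'_m)$. I read $|c|$ as the dimension $dim(c)=n_1+\ldots+n_n-n$, i.e. the degree $i$ of $c$ viewed in $R_{i+1}[X]$; the proof will show this is the reading that makes the signs work. Two degenerate cases are disposed of first.

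\begin{itemize}
\item If the end vertex of $c$ is not the start vertex of $d$, then $c\otimes d=0$. Every face $d_{k,\mathcal{I}}$ preserves the endpoints of a cube chain, so $\partial(c)\otimes d$ and $c\otimes\partial(d)$ are also $0$.
\item If $c$ or $d$ is the empty chain $()_{v,v}$, its boundary is the empty sum and its dimension is $0$. The identity then reduces to $\partial(d)=\partial(d)$ or $\partial(c)=\partial(c)$.
\end{itemize}

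In the main case, write $e=c\otimes d=(e_1,\ldots,e_{n+m})$ and split the outer sum $\sum_{k=1}^{n+m}$ in $\partial e$ into the ranges $k\le n$ and $k=n+k'$ with $1\le k'\le m$.

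For $k\le n$, the face is $d_{k,\mathcal{I}}(e)=d_{k,\mathcal{I}}(c)\otimes d$. This holds because $d_{k,\mathcal{I}}$ only replaces $c_k$ by the two cubes $d^0_{\overline{\mathcal{I}}}(c_k)$ and $d^1_{\mathcal{I}}(c_k)$ and leaves the rest of the sequence unchanged. The sign $(-1)^{n_1+\ldots+n_{k-1}+k+r+1}sgn(\mathcal{I})$ involves only the cubes preceding position $k$, which all lie in $c$. It is therefore the same sign as in $\partial c$, and this block sums exactly to $\partial(c)\otimes d$.

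For $k=n+k'$, the face is $d_{n+k',\mathcal{I}}(e)=c\otimes d_{k',\mathcal{I}}(d)$, and $sgn(\mathcal{I})$ depends only on $\mathcal{I}$ and the cube $d_{k'}$. The exponent is
$$(n_1+\ldots+n_n)+(n'_1+\ldots+n'_{k'-1})+(n+k')+r+1.$$
Compared with the exponent $n'_1+\ldots+n'_{k'-1}+k'+r+1$ appearing in $\partial d$, the extra term is $n_1+\ldots+n_n+n$. This is congruent mod $2$ to $n_1+\ldots+n_n-n=dim(c)$. Hence this block equals $(-1)^{dim(c)}\,c\otimes\partial(d)$, and adding the two blocks gives the identity.

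The only delicate point is this sign bookkeeping: checking that the shift in the index $k$ and the prefix sum $n_1+\ldots+n_{k-1}$ combine into exactly $(-1)^{dim(c)}$. This in turn fixes the convention $|c|=dim(c)$. The secondary check is that $d_{k,\mathcal{I}}$ acts locally on the sequence and commutes with concatenation, which is immediate from its definition. Bilinearity then extends the formula from basis chains to all of $R_i[X]\times R_j[X]$.
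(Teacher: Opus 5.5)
Your proof is correct and follows the same route as the paper: split the outer sum in $\partial(c\otimes d)$ at $k=n$, note that $d_{k,\mathcal{I}}$ acts locally so the first block is exactly $\partial(c)\otimes d$, and compare exponents in the second block. Your reading of $|c|$ is the one that actually works: the paper takes $|c|=n$, the number of cubes, and records the factor as $(-1)^n$, but the exponent shift is $n_1+\cdots+n_n+n\equiv\dim(c)$ (for $c$ a single $1$-cube and $d$ a single $2$-cube the factor is $+1$, not $-1$), and with $|c|=\dim(c)$ the sign is constant on $R_{i+1}[X]$, so the bilinear extension needs no equal-length hypothesis.
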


\begin{proof}
Suppose first that $c$ and $d$ are cube chains

When $c$ and $d$ do not have matching ends, the equality above is trivial (everything is 0).

Suppose now that $c$ and $d$ have matching ends. 
Let $c=(c_1,\ldots,c_n)$ (so that $|c|=n$ is the length of $c$), and $d=(d_1,\ldots,d_m)$. Therefore we have $c\otimes d=(c_1,\ldots,c_n,d_1,\ldots,d_m)$.

Now: 

$$
\begin{array}{rcl}
{\scriptstyle \partial (c\otimes d)} & {\scriptstyle =} &\sum\limits_{k=1}^{n+m} \sum\limits_{r=1}^{n_k-1} \sum\limits_{\scriptscriptstyle \mathcal{I} \subseteq\{0,\ldots,n_k-1\}: \ \mid \mathcal{I} \mid=r}
{\scriptstyle (-1)^{\scriptscriptstyle n_1+\ldots+n_{k-1}+k+r+1} sgn(\mbox{$\scriptstyle \mathcal{I}$}) d_{k,\mbox{$\scriptscriptstyle \mathcal{I}$}}(c\otimes d)} \\
& {\scriptstyle =} & \sum\limits_{k=1}^{n} \sum\limits_{r=1}^{n_k-1} \sum\limits_{\scriptscriptstyle \mathcal{I} \subseteq\{0,\ldots,n_k-1\}: \ \mid \mathcal{I} \mid=r}
{\scriptstyle (-1)^{\scriptscriptstyle n_1+\ldots+n_{k-1}+k+r+1} sgn(\mbox{$\scriptstyle \mathcal{I}$}) d_{k,\mbox{$\scriptscriptstyle \mathcal{I}$}}(c\otimes d)} \\
& & +\sum\limits_{k=n+1}^{m} \sum\limits_{r=1}^{n_k-1} \sum\limits_{\scriptscriptstyle \mathcal{I} \subseteq\{0,\ldots,n_k-1\}: \ \mid \mathcal{I} \mid=r} 
{\scriptstyle (-1)^{\scriptscriptstyle n_1+\ldots+n_{k-1}+k+r+1}
sgn(\mbox{$\scriptstyle \mathcal{I}$}) d_{k,\mbox{$\scriptscriptstyle \mathcal{I}$}}
 (c\otimes d)} \\ 
& {\scriptstyle =} & \sum\limits_{k=1}^{n} \sum\limits_{r=1}^{n_k-1} \sum\limits_{\scriptscriptstyle \mathcal{I} \subseteq\{0,\ldots,n_k-1\}: \ \mid \mathcal{I} \mid=r}
{\scriptstyle (-1)^{\scriptscriptstyle n_1+\ldots+n_{k-1}+k+r+1} sgn(\mbox{$\scriptstyle \mathcal{I}$}) d_{k,\mbox{$\scriptscriptstyle \mathcal{I}$}}(c)\otimes d} \\
& & +(-1)^n \sum\limits_{k=1}^{m} \sum\limits_{r=1}^{n_k-1} \sum\limits_{\scriptscriptstyle \mathcal{I} \subseteq\{0,\ldots,n_k-1\}: \ \mid \mathcal{I} \mid=r}
{\scriptstyle (-1)^{\scriptscriptstyle n_1+\ldots+n_{k-1}+k+r+1} sgn(\mbox{$\scriptstyle \mathcal{I}$}) c\otimes d_{k,\mbox{$\scriptscriptstyle \mathcal{I}$}}(d)} \\
& {\scriptstyle =} & {\scriptstyle \partial(c)\otimes d +(-1)^{|c|}c\otimes d}
\end{array}
$$
%
%The case when $c$ and $d$ are general elements of $R_i[X]$ and of $R_j[X]$ respectively stems out from bilinearity of the tensor operation. 
\end{proof}

This tensor operation generates in turn maps in homology as follows, at least for a restricted case of precubical set. 

For the rest of the paper, {\bf we now assume that all $m$-cube-chains $c$ in $Ch^{=m}(X)_v^w$, for all $v, w \in X_0$ have the same length $|c|$ (i.e. the length of cube chains only depends on its dimension and beginning and end vertices)}. This is in particular the case for pre-cubical complexes which give the semantics of non-looping PV-programs \cite{thebook}. 

In that case, for any $c\in Ch^{=i+1}(X)_{\alpha}^{\beta}$ and $d\in Ch^{=j+1}(X)_{\beta}^{\gamma}$, $\partial(c\otimes d) = \partial(c)\otimes d +(-1)^{|c|} c\otimes \partial(d)$ as well, because we can extend by bilinearity the result of Lemma \ref{lem:diffalg}: by hypothesis, all $(i+1)$-cube-chains from $\alpha$ to $\beta$ have the same length $|c|$. Now:

\begin{lemma}
\label{lem:conc}
We have homological operations called conc-products defined by maps 
$$\circledast: \ HM_i[X]_\alpha^\beta \otimes HM_j[X]_\beta^\gamma \rightarrow HM_{i+j-1}[X]_\alpha^\gamma$$ 
\noindent for all $i\geq 1$, $j\geq 1$ and $(\alpha,\beta) \in R_0[X]$. 
\end{lemma}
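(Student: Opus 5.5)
The plan is the standard argument that a Leibniz rule induces a product on homology. Here $HM_i[X]_\alpha^\beta$ denotes the homology of the complex $R^\beta_{\alpha,*}[X]$ in degree $i$, dual to Definition~\ref{def:cohomology}. The tensor product of cube chains maps $R_i[X]_\alpha^\beta\times R_j[X]_\beta^\gamma$ bilinearly into $R_{i+j-1}[X]_\alpha^\gamma$; the degree works because dimension is additive by Remark~\ref{rem:dimension}. So it induces an $R$-linear map on $R_i[X]_\alpha^\beta\otimes_R R_j[X]_\beta^\gamma$. We will show that this map sends cycles to cycles and sends (boundary)$\otimes$(cycle) and (cycle)$\otimes$(boundary) to boundaries. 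It then descends to
$$[c]\circledast[d]=[c\otimes d],$$
and bilinearity of $\circledast$ follows from bilinearity of $\otimes$.

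\textbf{Step 1 (Leibniz rule on the whole modules).} By the standing assumption, all cube chains in $Ch^{=i-1}(X)_\alpha^\beta$ share a common length, call it $\ell_{i}(\alpha,\beta)$. Lemma~\ref{lem:diffalg} therefore extends by bilinearity to arbitrary $x\in R_i[X]_\alpha^\beta$ and $y\in R_j[X]_\beta^\gamma$:
$$\partial(x\otimes y)=\partial(x)\otimes y+(-1)^{\ell_i(\alpha,\beta)}x\otimes\partial(y),$$
as the paper notes just before the statement. The sign is uniform on $R_i[X]_\alpha^\beta$, which is exactly why the length hypothesis is needed.

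\textbf{Step 2 (cycles and boundaries).} Take cycles $x$ and $y$, so $\partial x=0$ and $\partial y=0$. Step 1 gives $\partial(x\otimes y)=0$, so $x\otimes y$ is a cycle.
\begin{itemize}
\item If $x=\partial u$ with $u\in R_{i+1}[X]_\alpha^\beta$ and $y$ is a cycle, then $\partial(u\otimes y)=\partial u\otimes y \pm u\otimes\partial y=x\otimes y$. So $x\otimes y$ is a boundary.
\item If $x$ is a cycle and $y=\partial v$, then $\partial(x\otimes v)=\pm x\otimes y$, since $\partial x=0$. So $x\otimes y$ is again a boundary.
\end{itemize}
Consequently $[x\otimes y]$ depends only on the classes $[x]$ and $[y]$. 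The resulting map is bilinear, hence factors through $HM_i[X]_\alpha^\beta\otimes_R HM_j[X]_\beta^\gamma$. Endpoint matching is automatic, since both classes share the vertex $\beta$.

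\textbf{Main obstacle: the low-degree case.} When $i=1$ or $j=1$, the bottom module $R_1[X]$ of dimension-0 chains has $\partial=0$ and no lower term. Step 1 must still hold there, i.e.\ $\partial(x\otimes y)$ must be computed correctly when one factor consists of 1-cubes only or is an empty chain $()_{\beta,\beta}$. I would check this directly. For 1-cubes, $n_k=1$, so the inner sum over $r$ is empty and those cubes contribute nothing. The only care needed is the sign $(-1)^{n_1+\cdots+n_{k-1}+k}$ for cubes of $y$ shifted past the cubes of $x$. This shift produces the factor $(-1)^{\sum n_k + n}$, which depends only on the dimension and length of $x$, so it is constant on $R_i[X]_\alpha^\beta$ by the standing hypothesis. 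With that verified, the argument above goes through uniformly for all $i,j\geq 1$.
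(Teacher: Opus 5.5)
Your proposal is correct and follows essentially the same route as the paper: extend the Leibniz rule of Lemma~\ref{lem:diffalg} by bilinearity, deduce that cycles tensor to cycles and that (cycle)$\otimes$(boundary) and (boundary)$\otimes$(cycle) are boundaries, and set $[c]\circledast[d]=[c\otimes d]$. One small remark: your recomputation in the last paragraph is the accurate one, since $(-1)^{\sum n_k+n}=(-1)^{\dim x}$, so the factor on $R_i[X]_\alpha^\beta$ is $(-1)^{i-1}$ rather than the $(-1)^{\ell_i(\alpha,\beta)}$ quoted in Step~1 (take $x$ a single $1$-cube and $y$ a single $2$-cube), and it is uniform even without the equal-length hypothesis; since you only ever use $\pm$, this does not affect the argument.
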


\begin{proof}
The proof goes as with any boundary operator of a graded differential module. First, we note that products of cycles are cycles, since, if $\partial(c)=\partial(d)=0$,  
$$
\begin{array}{rcl}
\partial(c\otimes d) & = & \partial(c)\otimes d +(-1)^{|c|} c\otimes \partial(d) \\
& = & 0
\end{array}
$$

Now, we note that the product of a cycle with a boundary is a boundary, as, if $c$ is such that $\partial(c)=0$ and $d=\partial(D)$, 
$$
\begin{array}{rcl}
\partial(c\otimes D) & = & \partial(c)\otimes D +(-1)^{|c|} c\otimes \partial(D) \\
& = & (-1)^{|c|} c \otimes d
\end{array}
$$
Therefore $c \otimes d=(-1)^{|c|} \partial(c\otimes D)\in Im \ \partial$ is a boundary. 

We define the $\circledast$ operation on classes $[c]$ of a cycle $c$ in $HM_i[X]^\beta_\alpha$ and $[d]$ of a cycle $d$ in $HM_j[X]^\gamma_\beta$ to be equal to $[c]\circledast [d]=[c\otimes d]$. We now check that this definition is well-formed. 

Indeed, suppose that $[c']=[c]$ and $[d']=[d]$, that is, there exists $C \in R_{i+1}[X]_\alpha^\beta$ and $D \in R_{j+1}[X]_\beta^\gamma$ such that $c'=c+\partial(C)$ and $d'=d+\partial(D)$. Then: 

$$
\begin{array}{rcl}
c'\otimes d' & = & c\otimes d+c\otimes \partial(D)+\partial(C)\otimes d + \partial(C)\otimes \partial(D)
\end{array}
$$
But $c$ is a cycle and $\partial(D)$ is a boundary, so $c\otimes \partial(D)$ is a boundary as we just saw above. Similarly, $\partial(C)\otimes d$ is a coboundary. Now, $\partial(D)$ is a boundary, so is a cycle, in particular, since $\partial^2=0$, hence $\partial(C)\otimes \partial(D)$ is the product of a boundary with a cycle, so is a boundary as well. Overall, we conclude that $c'\otimes d'$ is equal to $c\otimes d$ modulo a boundary, hence $[c'\otimes d']=[c\otimes d]$ making the definition of $\circledast$ well posed. 
\end{proof}

\begin{remark}
Note that for $i$ or $j$ equal to 1, conc-products are maps from $HM_1[X]_\alpha^\beta \times HM_j[X]_\beta^\gamma$ to $HM_1[X]_\alpha^\gamma$. Also, this implies that the module $HM_1[X]$ is actually an algebra using the internal multiplication $\circledast$. Overall, we can replace the bimodule homology by a reduced version, where all $HM_i[X]$ are $HM_1[X]$-bimodules, $HM_1[X]$ being also an algebra. 
\end{remark}

\begin{remark}
We claim Lemma \ref{lem:conc} still holds for all finite precubical sets with proper non-looping length covering. A proof will need to consider the construction above on the non-looping length covering of $X$ and then use  Proposition 5.3 of \cite{RaussenII}. 
\end{remark}

\section{Directed Cohomology operations}

\label{sec:directedoperationscub}

\subsection{Local cup-product}

\begin{lemma}
There is a $R$-module map 
$$\smile: \ HM^i[X]_\alpha^\beta \times HM^j[X]_\alpha^\beta \rightarrow HM^{i+j-1}[X]_\alpha^\beta$$ induced by the cup-product of the trace spaces of $|X|$ from $\alpha$ to $\beta$ in $X_0$. 
\end{lemma}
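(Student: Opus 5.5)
The plan is to transport the classical cup product through the identification of Lemma \ref{lem:homtrace}. By that lemma, for $n\geq 1$ we have an isomorphism of $R$-modules $\Phi^n_{\alpha,\beta}: HM^n[X]_\alpha^\beta \to H^{n-1}(Tr(|X|)_\alpha^\beta;R)$. This isomorphism comes from Theorem 1.7 of \cite{cubechains}, which identifies the cochain complex built on $Ch(X)_\alpha^\beta$ with a model of the singular cochains of the trace space. On the singular side we have the usual cup product
$$\cup: H^{i-1}(Tr(|X|)_\alpha^\beta;R)\times H^{j-1}(Tr(|X|)_\alpha^\beta;R)\rightarrow H^{i+j-2}(Tr(|X|)_\alpha^\beta;R).$$
This product is $R$-bilinear and well defined on classes by the standard Leibniz formula $\delta(f\cup g)=\delta f\cup g+(-1)^{|f|}f\cup\delta g$.

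We then define, for $x\in HM^i[X]_\alpha^\beta$ and $y\in HM^j[X]_\alpha^\beta$,
$$x\smile y=(\Phi^{i+j-1}_{\alpha,\beta})^{-1}\left(\Phi^i_{\alpha,\beta}(x)\cup\Phi^j_{\alpha,\beta}(y)\right).$$
The degrees match: $(i-1)+(j-1)=(i+j-1)-1$, so the output lands in $HM^{i+j-1}[X]_\alpha^\beta$, as required by the statement. Bilinearity, and hence the fact that $\smile$ is an $R$-module map on $HM^i\otimes HM^j$, follows because each $\Phi$ is $R$-linear and $\cup$ is bilinear. Associativity and graded commutativity are inherited from the singular side, though these are not needed for the lemma. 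Since Lemma \ref{lem:homtrace} is stated in each degree independently, I would check that the identifications for degrees $i$, $j$ and $i+j-1$ are the ones induced by a single chain-level map. This ensures that $\smile$ is genuinely "induced by" the cup product rather than being an arbitrary transport. For the lemma as stated, however, degreewise isomorphisms already suffice to produce the map.

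The main subtlety is naturality, and it is the step I expect to require care. The operation should be canonical, and it would be desirable to relate it to the bimodule structure: for $p,q$ in $R[X]^{op}$, one would like $p\bullet(x\smile y)\bullet q=(p\bullet x\bullet q)\smile(p\bullet y\bullet q)$. I would try to obtain this from naturality of the cup product under the maps $Tr(|X|)_{\alpha'}^{\beta'}\to Tr(|X|)_\alpha^\beta$ given by pre- and post-concatenation with the dipaths $p$ and $q$. This in turn requires that the isomorphism of \cite{cubechains} be natural with respect to those concatenation maps. Concretely, concatenation with a $0$-cube chain should correspond, under the homotopy equivalence between the cube chain complex and the trace space, to concatenation of traces. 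I would verify this at the level of the explicit map from cube chains to traces used in \cite{cubechains}.

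If this naturality is too delicate, I would state the lemma only at the level of $R$-modules for fixed $(\alpha,\beta)$, which is exactly what is claimed, and postpone the compatibility with the $R[X]^{op}$-action.
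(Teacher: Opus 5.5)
Your proposal is correct and follows the paper's proof: you transport the singular cup product on $H^*(Tr(|X|)_\alpha^\beta)$ through the degreewise isomorphisms of Lemma~\ref{lem:homtrace}, and the degree shift $(i-1)+(j-1)=(i+j-1)-1$ lands the result in $HM^{i+j-1}[X]_\alpha^\beta$. Your extra discussion of compatibility with the $R[X]^{op}$-bimodule action is not needed for the lemma, and the paper likewise leaves that compatibility as a claim for future work in the remark following the proof.
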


\begin{proof}
We know by Lemma \ref{lem:homtrace} that 
the $R$-module $HM^n[X]^b_a$, $n \geq 1$, and $a, b \in X_0$, is the standard $(n-1)$th cohomology of the trace space $T(| X |)^b_a$ from $a$ to $b$. We indeed have a cup-product on the cohomology of $Tr(| X |)^b_a$, hence 
all $R$-modules $HM^*[X]_\alpha^\beta$ are equipped with a cup-product $\smile: \ HM^i[X]_\alpha^\beta \times HM^j[X]_\alpha^\beta \rightarrow HM^{i+j-1}[X]_\alpha^\beta$. 
\end{proof}

The cochain complex on which $HM^*[X]$ is computed is given by a CW-complex, in theory, it is feasible to give an explicit formula for the cup-product, using the formulas of \cite{Whitney}, or by going through a simplicial version of the underlying CW-complex. This is left for future work.

\begin{remark}
Consider $p$ a 0-cube chain from $\alpha$ to $\alpha'$ in $X_0$ and $q$ a 0-cube chain from $\beta'$ to $\beta$. The action of $p$ on the left (and respectively $q$ on the right), on element $x\in HM^i[X]_\alpha^\beta$ (and on respectively $y\in HM^j[X]$) gives elements
$p\bullet x \bullet q \in HM^i[X]$ and $p\bullet y \bullet q \in HM^j[X]$. We claim is that $p\bullet (x \smile y) \bullet q=(p\bullet x\bullet q)\smile (p\bullet y\bullet q)$, this is left for future work. 
\end{remark}

\subsection{Cohomological inner-tensor}

In this section, we dualize the homological operation $\circledast$ by defining the following: 

\begin{definition}
We define a dual tensor operation on cochains $f\in R^*_i[X]_\alpha^\beta$ and $g \in R^*_j[X]_\beta^\gamma$ by, for $c \in R_i[X]_\alpha^\beta$ and $d\in R_j[X]_\beta^\gamma$:
$$
f\boxtimes g(c\otimes d) = f(c).g(d)
$$
\noindent (where $.$ denotes the product in $R$ and where $f\boxtimes g(z)=0$ for $z$ $(i+j-1)$-cube chain from $\alpha$ top $\gamma$ that does not go through $\beta$)
\end{definition}

Overall the $\boxtimes$ operation gives a map from $\coprod_{\scriptscriptstyle \beta \mbox{ \tiny s.t. } (\alpha,\beta)\in R_0[X], (\beta,\gamma)\in R_0[X]} {R^*_i[X]}_\alpha^\beta \times {R^*_j[X]}_\beta^\gamma$ to ${R^*_{i+j-1}[X]}_\alpha^\beta$. 

Now, consider $Y \in {R_{i+j-1}[X]}_\alpha^\gamma$ that are tensor products $Y=Y_1\otimes Y_2$ with $Y_1 \in {R_i[X]}_\alpha^\beta$, $Y_2\in {R_j[X]}_\beta^\gamma$, for some $\beta$, then: 

$$
\begin{array}{rcl}
\partial^*(f\boxtimes g)(Y) & = & f\boxtimes g(\partial(Y)) \\
& = & f\boxtimes g(\partial(Y_1)\otimes Y_2+(-1)^{|Y_1|}Y_1\otimes \partial(Y_2)) \\
& = & f\boxtimes g(\partial(Y_1)\otimes Y_2)+(-1)^{|Y_1|}f\boxtimes g(Y_1\otimes \partial(Y_2)) \\
& = & f(\partial(Y_1)).g(Y_2)+(-1)^{|Y_1|}f(Y_1).g(\partial(Y_2)) \\
& = & \partial^*(f)(Y_1).g(Y_2)+(-1)^{|Y_1|}f(Y_1).\partial^*(g)(Y_2) \\
%& = & f(\partial(Y_1)).g(Y_2)+(-1)^{|Y_1|}f(Y_1).g(\partial(Y_2) \\
& = & \partial^*(f)\boxtimes g(Y_1\otimes Y_2)+(-1)^{|Y_1|}f\boxtimes \partial^*(g)(Y_1\otimes Y_2)
\end{array}$$
\noindent (since we are still in the restricted case of precubical sets that are such that $m$-cube chains from $\alpha$ to $\beta$ have the same length). 

\begin{lemma}
The product $\boxtimes$ induces a $R$-module map 
$\curvearrowright$ from ${HM^i[X]}_\alpha^\beta \times {HM^j[X]}_\beta^\gamma$ to ${HM^{i+j-1}[X]}_\alpha^\beta$.
\end{lemma}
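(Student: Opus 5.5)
The plan is to mirror the proof of Lemma \ref{lem:conc}, dualized. The computation just before the statement shows that, under the standing assumption that all $m$-cube chains between two fixed vertices have the same length, $\partial^*$ satisfies a Leibniz rule with respect to $\boxtimes$:
$$\partial^*(f\boxtimes g)=\partial^*(f)\boxtimes g+(-1)^{|Y_1|}f\boxtimes \partial^*(g).$$
This is established on elements of the form $Y_1\otimes Y_2$. The first step is to make sure the identity holds on all of ${R_{i+j}[X]}_\alpha^\gamma$, not only on tensor products.

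\textbf{Step 1: extending the Leibniz rule.} Every cube chain $z$ from $\alpha$ to $\gamma$ splits at each intermediate vertex it visits. I will interpret $f\boxtimes g$ on such a $z$ as the sum over its splittings $z=z_1\otimes z_2$ with $z_1$ of the dimension prescribed for $f$. Chains that admit no such splitting are sent to $0$, as in the definition.

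Since $\partial$ only modifies one cube at a time, the faces $d_{k,\mathcal{I}}$ of a splittable chain stay splittable. The only exceptions are faces that create a new splitting vertex inside a cube, namely $d^0_{\overline{\mathcal{I}}}(c_k)$ followed by $d^1_{\mathcal{I}}(c_k)$. This is exactly the term that pairs with $\partial(Y_1)\otimes Y_2$ versus $Y_1\otimes\partial(Y_2)$. I therefore expect the Leibniz identity to hold on all generators, by linearity and the equal-length hypothesis, which makes the sign $(-1)^{|Y_1|}$ constant. Checking this bookkeeping is the main obstacle. The new vertices produced by $d_{k,\mathcal{I}}$ can give rise to splittings of $\partial z$ that were not splittings of $z$, and I must verify that these extra contributions cancel. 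If the direct argument fails, I will fall back on proving the identity only for cocycles evaluated on cycles, which is all that is needed downstream.

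\textbf{Step 2: cocycles and coboundaries.} Given the Leibniz rule, I will argue as in Lemma \ref{lem:conc}:
\begin{itemize}
\item If $\partial^*f=\partial^*g=0$, then $\partial^*(f\boxtimes g)=0$, so $\boxtimes$ of cocycles is a cocycle.
\item If $f$ is a cocycle and $g=\partial^*G$, then $\partial^*(f\boxtimes G)=\pm f\boxtimes g$, so $f\boxtimes g$ is a coboundary.
\item Symmetrically, if $f=\partial^*F$ and $g$ is a cocycle, then $\partial^*(F\boxtimes g)=f\boxtimes g$.
\end{itemize}

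\textbf{Step 3: the induced map.} I will define $[f]\curvearrowright[g]=[f\boxtimes g]$. To check well-definedness, take $f'=f+\partial^*F$ and $g'=g+\partial^*G$ and expand $f'\boxtimes g'$ bilinearly. The three cross terms are each a product of a cocycle with a coboundary (note that $\partial^*F$ is itself a cocycle since $\partial^*\circ\partial^*=0$), so by Step 2 they are coboundaries. Hence $[f'\boxtimes g']=[f\boxtimes g]$.

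\textbf{Step 4: degree and linearity.} Additivity of dimension (Remark \ref{rem:dimension}) shows that the target is cohomology in degree $i+j-1$ with the paper's indexing convention. The product lands in cochains from $\alpha$ to $\gamma$, so I will read the stated target ${HM^{i+j-1}[X]}_\alpha^\beta$ as having endpoints $\alpha,\gamma$. $R$-bilinearity is immediate from the bilinearity of $\boxtimes$, and so $\curvearrowright$ is an $R$-module map as claimed.
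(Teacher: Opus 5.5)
\textbf{There is a genuine gap: Step 1 cannot be completed, and the statement is false with $\boxtimes$ as defined.}

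Your Steps 2--4 reproduce the paper's proof. The paper checks the Leibniz rule only on elements $Y=Y_1\otimes Y_2$ passing through $\beta$, then invokes the argument of Lemma~\ref{lem:conc}. You correctly saw that this leaves the Leibniz rule open on cube chains from $\alpha$ to $\gamma$ that avoid $\beta$. But the bookkeeping you postpone in Step 1 cannot be done, because the extra terms do not cancel. Your reinterpretation as a sum over splittings changes nothing, since $f$ only sees chains ending at $\beta$.

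Suppose $z$ avoids $\beta$, but one of its cubes $c_k$ has $\beta$ as the vertex where $d^0_{\overline{\mathcal{I}}}(c_k)$ and $d^1_{\mathcal{I}}(c_k)$ meet. Then $d_{k,\mathcal{I}}(z)$ passes through $\beta$. So $(f\boxtimes g)(\partial z)$ can be nonzero, while $\partial^*f\boxtimes g$ and $f\boxtimes\partial^*g$ both vanish on $z$. There is nothing for the new term to pair with.

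The smallest instance is a single filled square:
\begin{itemize}
\item vertices $\alpha=(0,0)$, $\beta=(1,0)$, $\beta'=(0,1)$, $\gamma=(1,1)$;
\item edges $a:\alpha\to\beta$, $b:\beta\to\gamma$, $a':\alpha\to\beta'$, $b':\beta'\to\gamma$;
\item one $2$-cube $c$.
\end{itemize}
It is proper and satisfies the equal-length hypothesis. There are no $1$-dimensional cube chains from $\alpha$ to $\beta$ or from $\beta$ to $\gamma$. Hence the cochains $f$ with $f(a)=1$ and $g$ with $g(b)=1$ are cocycles, generating $HM^1[X]_\alpha^\beta\cong HM^1[X]_\beta^\gamma\cong R$. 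Now $(f\boxtimes g)(a,b)=1$ and $(f\boxtimes g)(a',b')=0$, while $\partial(c)=\pm\big((a,b)-(a',b')\big)$. So $\partial^*(f\boxtimes g)(c)=\pm 1\neq 0$: the $\boxtimes$-product of two cocycles is not a cocycle.

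Your fallback does not help either. Being a cocycle means exactly vanishing on boundaries. Here $f\boxtimes g$ is nonzero on the boundary $(a,b)-(a',b')$, so it does not even induce a functional on $HM_1[X]_\alpha^\gamma$.

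The structural reason is this. Cube chains through $\beta$ span a subcomplex of $R_*[X]_\alpha^\gamma$, isomorphic to $R_*[X]_\alpha^\beta\otimes R_*[X]_\beta^\gamma$. Chains avoiding $\beta$ do not span a subcomplex. So extending a cochain on the former by zero, which is what $\boxtimes$ does, is not a cochain map. Concatenation of traces naturally sends the cohomology of $Tr(|X|)_\alpha^\gamma$ \emph{to} the cohomology of traces through $\beta$, not the other way round.

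So the paper's one-line proof has the same hole as yours. A true statement needs either a different definition of the product, or an extra hypothesis. One such hypothesis: no cube of a chain from $\alpha$ to $\gamma$ has $\beta$ as a vertex other than its start and end. Then chains avoiding $\beta$ do form a subcomplex, both sides of the Leibniz rule vanish on them, and your Steps 1--4 go through.
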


\begin{proof} 
Similarly as with the proof of Lemma \ref{lem:conc}, this implies that the $\boxtimes$ product of a cocycle in $R^*_{i}[X]^\beta_\alpha$ with a cocycle in $R^*_{j}[X]^\beta_\alpha$ is a cocycle in $R^*_{i+j-1}[X]^\beta_\alpha$, and the product of a cocycle with a coboundary is a coboundary, showing that 
$\boxtimes$ goes over to cohomological classes. 
\end{proof}

%\coprod_{\scriptscriptstyle \beta \mbox{ \tiny s.t. } (\alpha,\beta)\in R_0[X], (\beta,\gamma)\in R_0[X]}

\section{Homological and cohomological operations in the case of general $d$-spaces}

\label{sec:dspace}

We now recap, for completeness, the construction of Section 6 of \cite{semi-ab}. 
We recall the following: 
%\begin{definition}
%\label{def:standardsimplex}
the standard simplex of dimension $n$ is $$
\Delta_n=\left\{(t_0,\ldots,t_n) \mid \forall i\in \{0,\ldots,n\}, \ t_i \geq 0 \mbox{ and } \sum\limits_{j=0}^n t_j=1\right\}
$$
For $n \in \N$, $n
\geq 1$ and $0 \leq k \leq n$, the $k$th ($n-1$)-face (inclusion) of the topological $n$-simplex is the subspace inclusion
$$\delta_k: \ \Delta_{n-1} \rightarrow \Delta_n$$
induced by the inclusion
$$(t_0,\ldots,t_{n-1}) \rightarrow (t_0,\ldots,t_{k-1},0,t_k,
\ldots, t_{n-1})$$
For $n \in \N$ and $0\leq k < n$, the 
$k$th degenerate 
$n$-simplex is the surjective map
$$
\sigma_k: \ \Delta_n \rightarrow \Delta_{n-1}$$
\noindent induced by the surjection: 
$$(t_0,\ldots,t_n)\rightarrow (t_0,\ldots,t_{k}+t_{k+1},\ldots,t_n)$$
%\end{definition}

Still as in \cite{semi-ab},  
we call $p$ an $i$-trace, $i\geq 1$, or trace of dimension $i$ of a directed space $\Vec{X}$, any 
 continuous map 
$$p: \Delta_{i-1} \rightarrow Tr(\Vec{X})$$ 
\noindent which is such that: 
\begin{itemize}
%\item modulo continuous increasing reparameterization in the first coordinate, %and continuous reparameterizations in the other coordinates 
\item $p(t_0,\ldots,t_{i-1})(0)$ does not depend on $t_0,\ldots,t_{i-1}$ and we denote it by $s_p$ (``start of $p$")
\item and $p(t_0,\ldots,t_{i-1})(1)$ is constant as well, that we write as $t_p$ (``target of $p$")
\end{itemize}
We write $T_i(\Vec{X})$ for the set of $i$-traces in $\Vec{X}$. We write $T_i(\Vec{X})(a,b)$, $a$, $b \in \Vec{X}$, for the subset of $T_i(\Vec{X})$ made of $i$-traces from $a$ to $b$. 

In short, an $i$-trace of $\Vec{X}$ is a particular $(i-1)$-simplex of the trace space of $\Vec{X}$, for which we can define boundary and degeneracy maps, as usual: 
%\todo[inline]{To be continued - curryfier partout...}

\begin{itemize}
\item Maps $d_{j}$, $j=0,\ldots,i$ acting on $(i+1)$-paths %$T_{i+1}(\Vec{X})$ ($i\geq 1$) by, for 
$p: \ \Delta_i \rightarrow Tr(\Vec{X})$, $i\geq 1$: %an $(i+1)$-trace: 
$$d_j(p)=p\circ \delta_j$$
%\noindent with $\delta_j: \Delta_{i-1} \rightarrow \Delta_i$ is the inclusion map of Definition \ref{def:standardsimplex}
%(s_1,\ldots,s_{i+1})=p(s_1,s_2,\ldots,0_{j+2},\ldots,s_{i})$$
\item Maps $s_k$, $k=0,\ldots,i-1$ acting on $i$-paths 
%$T_i(\Vec{X})$ by, for 
$p: \Delta_{i-1} \rightarrow Tr(\Vec{X})$, $i\geq 1$: % an $i$-trace:
$$s_k(p)=p\circ \sigma_k$$
%\noindent where $\sigma_k: \Delta_{i+1}\rightarrow \Delta_{i}$ is the surjective map of Definition \ref{def:standardsimplex}
\end{itemize}

The concatenation operation on dipaths, that we denote by $*$ obviously acts on the left and on the right on $Tr_i(\Vec{X})_\alpha^\beta$ when ends meet. These operations commute with the boundary and degeneracy operators, hence 
we can then define, as for precubical sets: 

\begin{definition}
% Homology
The directed homology bimodule of the directed space $\Vec{X}$ is $HM_i[\Vec{X}]=Ker \ \partial_{|C^i[\Vec{X}]}/Im \ \partial_{|C^{i+1}}[\Vec{X}]$.

The directed cohomology bimodule of the directed space $\Vec{X}$ is $HM^i[\Vec{X}]=Ker \ \partial^*_{|C^i[\Vec{X}]}/Im \ \partial^*_{|C^{i-1}}[\Vec{X}]$.
\end{definition}

\begin{remark}
Note that formally, things are a bit more difficult to define here, since the path algebra $R[\Vec{X}]$ is not unital for general $d$-spaces. We refer the reader again to \cite{eliot} and \cite{albert} for more details on how to conceive modules over non-unital algebras, in the particular context of directed homology. 
\end{remark}

Similarly to the case of precubical sets in Section \ref{sec:precuboperations},  concatenation $*$ induces homological and cohomological operations. 

Indeed, the concatenation operation on traces is a continuous maps from $Tr(X)^\beta_\alpha \times Tr(X)_\beta^\gamma$ to $Tr(X)_\alpha^\gamma$, therefore induces maps from $H_{i+j-2}(Tr(X)_\alpha^\beta
\times Tr(X)_\beta^\gamma)$
 to 
$H_{i+j-2}(Tr(X)_\alpha^\gamma$. For simplicity's sake, {\bf we are now supposing $R$ is a field}, then, by K\"unneth formula, $H_{i+j-2}(Tr(X)_\alpha^\beta
\times Tr(X)_\beta^\gamma)$ is isomorphic to: 
$$\mathop{\bigoplus}\limits_{k+l=i+j-2}
H_k(Tr(X)_\alpha^\beta)\times H_l(Tr(X)_\beta^\gamma)=\mathop{\bigoplus}\limits_{k+l=i+j}
HM_k[X]_\alpha^\beta\times HM_l[X]_\beta^\gamma$$
 to $HM_{i+j-1}[M]_\alpha^\gamma$. 

Similarly, there is a continuous map ${ }^\beta Tr(X)^\gamma_\alpha$, the subspace of traces in $X$ from $\alpha$ to $\gamma$, going through $\beta$, to $Tr(X)_\alpha^\beta\times Tr(X)_\beta^\gamma$. Using again K\"unneth formula, in cohomology, and the contravariance of the cohomology functor, we get a map from 
$\mathop{\bigoplus}\limits_{k+l=i+j}
HM^k[X]_\alpha^\beta\times HM^l[X]_\beta^\gamma$ to 
$H^{i+j-2}({ }^\beta Tr(X)_\alpha^\gamma)$. Finally, we note that we have a $R$-module map from $H^{i+j-2}({ }^\beta Tr(X)_\alpha^\gamma)$ to $H^{i+j-2}(Tr(X)_\alpha^\gamma)$ by mapping the class of a cochain $c$ in $H^{i+j-2}({ }^\beta Tr(X)_\alpha^\gamma)$ to the class of a cochain $d$ in $H^{i+j-2}(Tr(X)_\alpha^\gamma)$ by mapping $(i+j-2)$-traces $p$ of $X$ from $\alpha$ to $\gamma$ to $c(p)$ if $p$ goes through $\beta$, otherwise to $0$. Hence we have: 

\begin{definition}
% Cup-product
We call $$\circledast: \ \mathop{\bigoplus}\limits_{i+j=n} HM_i[\Vec{X}]_\alpha^\beta \otimes HM_j[\Vec{X}]_\beta^\gamma \rightarrow HM_{n-1}[\Vec{X}]_\alpha^\gamma$$ 
\noindent for all %$i\geq 1$, $j\geq 1$ and 
$\alpha,\beta,\gamma \in \Vec{X}$ such that $\beta$ is reachable from $\alpha$, and $\gamma$ is reachable from $\beta$.

We call $$\curvearrowright: \ \bigoplus\limits_{i+j=n} HM^i[\Vec{X}]_\alpha^\beta \otimes HM^j[\Vec{X}]_\beta^\gamma \rightarrow HM^{n-1}[\Vec{X}]_\alpha^\gamma$$ the induced operation in cohomology. 
\end{definition}

Finally, we have the classical ``local" cup products: 

\begin{definition}
We denote by: 
$$\smile: \ HM^i[\Vec{X}]_\alpha^\beta \times HM^j[\Vec{X}]_\alpha^\beta \rightarrow HM^{i+j-1}[\Vec{X}]_\alpha^\beta$$
\noindent the cup-product between the $(i-1)$th cohomology class and the $(j-1)$th cohomology class of the trace space of $\Vec{X}$ between points $\alpha$ and $\beta$ in $\Vec{X}$, such that $\beta$ is reachable from $\alpha$ in $\Vec{X}$. 
\end{definition}

\section{Examples of cohomological calculations}

\label{sec:examples}

Consider $n$ processes in the PV language of \cite{thebook}, in parallel, that can only synchronize weakly using $(n-1)$-semaphores.

As far as the cohomology of the trace space between two pair of points (identified with tuples of local times at which some action is started, or ended) is involved, it is easy to see that an equivalent geometric model to the classical cubical semantics (a subdivided hypercube of dimension $n$ where some $n$ cubes are missing) of such semaphore based programs \cite{book} is given by $[0,k_1]\times [0,k_2]\times \ldots [0,k_n] \backslash \mathcal{O} \subseteq \R^n$, where $\mathcal{O}$ is a set of points $\{\mathcal{O}_1,\ldots,\mathcal{O}_k\}$ with half integer coordinates. 

We further constrain the possible synchronizations to have that: 
$$
\forall l=1,\ldots,d, \ \forall i\neq j, \ 1 \leq i, j \leq k, \ \pi_l(\mathcal{O}_i)\neq \pi_l(\mathcal{O}_j) 
$$

In this section, we take $R=\Z$. 

\begin{example}
\label{ex:ex2D}
We exemplify this below. On the left is the precubical set of dimension 2 (2-cells are indicated by the diagonal double lines) and on the right, the ``equivalent" $d$-space, which is a square without four points $\mathcal{O}_1$, $\mathcal{O}_2$, $\mathcal{O}_3$, $\mathcal{O}_4$, delineated. It is obtained by ``shrinking" the empty 2-cells to a point. By equivalent, we mean that there is a bisimulation equivalence of their corresponding natural homologies, in the sense of \cite{Dubut}. 

We also superpose on the right figure below  the skeleton of the precubical set on the left, so that to obtain relevant pair of points at which compute the cohomology classes of the corresponding trace space: 

\begin{center}
\begin{minipage}{.45\textwidth}
\[
\begin{tikzcd}[column sep=1em, row sep=1em]
\arrow{rr}\ar[rrdd,equal] & & \arrow{rr}\ar[rrdd,equal] & & \arrow{rr}\ar[rrdd,equal] & & \arrow{rr} & &\mbox{} \\
& & & & & & & & \\
\arrow{rr}\arrow{uu}\ar[rrdd,equal] & & \arrow{rr}\arrow{uu} & & \arrow{rr}\arrow{uu}\ar[rrdd,equal] & & \arrow{rr}\arrow{uu}\ar[rrdd,equal] & & \arrow{uu} \\
& & & & & & & & \\
\arrow{rr}\arrow{uu}\ar[rrdd,equal] & & \arrow{rr}\arrow{uu}\ar[rrdd,equal] & & \arrow{rr}\arrow{uu} & & \arrow{rr}\arrow{uu}\ar[rrdd,equal] & & \arrow{uu} \\
& & & & & & & & \\
\arrow{rr}\arrow{uu} & & \arrow{rr}\arrow{uu}\ar[rrdd,equal] & & \arrow{rr}\arrow{uu}\ar[rrdd,equal] & & \arrow{rr}\arrow{uu}\ar[rrdd,equal] & & \arrow{uu} \\
& & & & & & & & \\
\arrow{rr}\arrow{uu} & & \arrow{rr}\arrow{uu} & & \arrow{rr}\arrow{uu} & & \arrow{rr}\arrow{uu} & & \arrow{uu}
\end{tikzcd}
\]
\end{minipage}
\begin{minipage}{.45\textwidth}
\[
\begin{tikzcd}[column sep=0.6em, row sep=0.5em]
\arrow{rrrrrrrr} & &  & &  & &  & &\mbox{} \\
& & & & & & & { }_{\mathcal{O}_4} \bullet & \\
& & & & & & & &  \\
& & { }_{\mathcal{O}_2} \bullet & & & & & & \\
& & & & & & & & \\
& & & & & & { }_{\mathcal{O}_3} \bullet & & \\
& & & & & & & & \\
& { }_{\mathcal{O}_1} \bullet & & & & & & & \\
%& & & & & & & & \\
\arrow{rrrrrrrr}\arrow{uuuuuuuu}\arrow[dashed,dash]{urururururururur} & &  & &  & &  & & \arrow{uuuuuuuu}
\end{tikzcd}
\]
\end{minipage}
\end{center}

Using as global time the diagonal of the square above, we see that we are in the case of \cite{Barychnikov}, with the same future cone as Example 3.1.1 of \cite{Barychnikov}. 

From \cite{Barychnikov} again, we know that the cohomology of the trace space of that $d$-space is concentrated in degree 0. Let us compute the generators, for all pairs of points in the skeleton of the precubical set
indicated below: 

\[
\begin{tikzcd}[column sep=0.5em, row sep=0.5em]
(0,4) \arrow{rr} & & (1,4) \arrow{rr} & & (2,4)\arrow{rr} & & (3,4) \arrow{rr} & & (4,4) \mbox{} \\
& & & & & & & { }_{\mathcal{O}_4} \bullet & \\
(0,3) \arrow{rr}\arrow{uu} & & (1,3) \arrow{rr}\arrow{uu} & & (2,3) \arrow{rr}\arrow{uu} & & (3,3) \arrow{rr}\arrow{uu} & & (4,3) \arrow{uu} \\
& & & { }_{\mathcal{O}_2} \bullet & & & & & \\
(0,2) \arrow{rr}\arrow{uu} & & (1,2) \arrow{rr}\arrow{uu} & & (2,2) \arrow{rr}\arrow{uu} & & (3,2) \arrow{rr}\arrow{uu} & & (4,2) \arrow{uu} \\
& & & & &  { }_{\mathcal{O}_3} \bullet & & & \\
(0,1) \arrow{rr}\arrow{uu} & & (1,1) \arrow{rr}\arrow{uu} & & (2,1) \arrow{rr}\arrow{uu} & & (3,1) \arrow{rr}\arrow{uu} & & (4,1) \arrow{uu} \\
& { }_{\mathcal{O}_1} \bullet & & & & & & & \\
(0,0)\arrow{rr}\arrow{uu} & & (1,0) \arrow{rr}\arrow{uu} & & (2,0)\arrow{rr}\arrow{uu} & & (3,0) \arrow{rr}\arrow{uu} & & (4,0)\arrow{uu}
\end{tikzcd}
\]

From \cite{Barychnikov}, we know that the 0th cohomology classes of the trace space between points $u$ and $v$ are in bijection with the chains of obstacle $\mathcal{O}_i$ starting with $u$ and ending in $v$. Note that the partial order is given componentwise and that as $\mathcal{O}_1=(1/2,1/2)$, $\mathcal{O}_2=(3/2,5/2)$, $\mathcal{O}_3=(5/2,3/2)$ and $\mathcal{O}_4=(7/2,7/2)$, we have $\mathcal{O}_1 < \mathcal{O}_2, \mathcal{O}_3 < \mathcal{O}_4$. 

For e.g. $u=(0,0)$ and $v=(5,5)$ we get the following 12 1th directed cohomology classes (corresponding to the 0th cohomology classes of the trace space between $u$ and $v$):
\begin{itemize}
    \item $(u < v)$
    \item $(u < \mathcal{O}_1 < v)$, $(u < \mathcal{O}_2 < v)$, $(u < \mathcal{O}_3 < v)$, $(u < \mathcal{O}_4 < v)$
    \item $(u < \mathcal{O}_1 < \mathcal{O}_2 < v)$, $(u < \mathcal{O}_1 < \mathcal{O}_3 < v)$, 
    $(u < \mathcal{O}_1 < \mathcal{O}_4 < v)$, 
    $(u < \mathcal{O}_2 < \mathcal{O}_4 < v)$, 
    $(u < \mathcal{O}_3 < \mathcal{O}_4 < v)$, 
    \item $(u < \mathcal{O}_1 < \mathcal{O}_2 < \mathcal{O}_4 < v)$, 
    $(u < \mathcal{O}_1 < \mathcal{O}_3 < \mathcal{O}_4 < v)$
\end{itemize}
Note that, from \cite{Barychnikov}, we know the cup product is idempotent: for all classes $c$, $c\smile c=c$. 

Now, the first directed cohomology classes between different pair of points $(u,v)$ are a subset of these classes, given by the same chains where they make sense. For instance, for $u=(0,0)$ and $v=(3,2)$, we get the 4 cohomology classes:
\begin{itemize}
    \item $(u < v)$
    \item $(u < \mathcal{O}_1 < v)$, $(u < \mathcal{O}_3 < v)$, 
    \item $(u < \mathcal{O}_1 < \mathcal{O}_3 < v)$
\end{itemize}

And for $u=(3,2)$, $v=(4,4)$, we get:
\begin{itemize}
\item $u < v$,
\item $u < \mathcal{O}_4 < v$
\end{itemize}

Let us exemplify the cohomological operation:
$$\curvearrowright: \ HM^1[X]^{(3,2)}_{(0,0)}\times HM^1[X]^{(4,4)}_{(3,2)}\rightarrow HM^1[X]^{(4,4)}_{(0,0)}$$ 
\noindent which concatenates these chains of obstacles, giving 8 0th cohomology classes:
\begin{itemize}
        \item $(u < v)$
    \item $(u < \mathcal{O}_1 < v)$, $(u < \mathcal{O}_3 < v)$, $(u < \mathcal{O}_4 < v)$
    \item $(u < \mathcal{O}_1 < \mathcal{O}_3 < v)$, 
    $(u < \mathcal{O}_1 < \mathcal{O}_4 < v)$, 
    $(u < \mathcal{O}_3 < \mathcal{O}_4 < v)$, 
    \item  
    $(u < \mathcal{O}_1 < \mathcal{O}_3 < \mathcal{O}_4 < v)$
\end{itemize}
\noindent which are the only chains of obstacles not containing $\mathcal{O}_2$ (the only obstacle not reachable between $(0,0)$ and $(3,2)$ as well as between $(3,2)$ and $(4,4)$). 
\end{example}

\begin{example}
\label{ex:3D}
Let us now consider the following example, in dimension 3: 
\[
\begin{tikzcd}[column sep=0.5em, row sep=0.5em]
& \arrow{rrrr} & & & & (4,4,4) \\
& & & & { }_{\mathcal{O}_4} \bullet & \\
& & { }_{\mathcal{O}_2} \bullet & & \\
 \arrow{uuur}\arrow{rrrr} & & & & \arrow{uuur} \\
%& & & & \\
& \arrow[dashed]{uuuu}\arrow[dashed]{rrrr} & & &  & \arrow{uuuu} & \\
& & & { }_{\mathcal{O}_3} \bullet & & \\
& { }_{\mathcal{O}_1} \bullet & & & & \\
(0,0,0) \arrow{uuuu}\arrow{uuur}\arrow{rrrr} & & & & \arrow{uuuu}\arrow{uuur}
\end{tikzcd}
\]
\noindent with the four points having coordinates $(1/2,1/2,1/2)$, $(3/2,5/2,3/2)$, $(5/2,3/2,$ $5/2)$ and $(7/2,7/2,7/2)$. 
Note that the accessibility relation between obstacles $\mathcal{O}_1$, $\mathcal{O}_2$, $\mathcal{O}_3$ and $\mathcal{O}_4$ is the same as for Example \ref{ex:ex2D}. 

Now, the obstacles $\mathcal{O}_i$ are of codimension 2, its corresponding avoidance class as defined in \cite{Barychnikov} is 
$c_i \in H^1(\R^2\backslash \mathcal{O}_i)$ (since $\R^2\backslash \mathcal{O}_i$ is homotopy equivalent to the 1-sphere $S^1$). 

Thus, the {cohomology ring of the trace space between $u=(0,0,0)$ and $v=(4,4,4)$}, hence the directed cohomology modules of the corresponding directed space 
are given by generators $c_1,\ldots, c_4$ with (Theorem 3.7 of \cite{Barychnikov}):
\begin{itemize}
\item In dimension 1 (0 in the trace space): 1 generator $\emptyset$ (corresponding to the empty chain of obstacles)
\item In dimension 2 (1 in the trace space): 4 generators $c_1$, $c_2$, $c_3$, $c_4$,
\item In dimension 3 (2 in the trace space): 5 generators $c_1 \smile c_2$, $c_1 \smile c_3$, $c_1\smile c_4$, $c_2\smile c_4$, $c_3\smile c_4$,
\item In dimension 4 (3 in the trace space): 2 generators $c_1 \smile c_2 \smile c_4$, $c_1 \smile c_3 \smile c_4$
\item All the other cohomology modules are trivial. 
\end{itemize}

If we change the end points, we get a similar calculation. For instance, for $u=(0,0,0)$ and $v=(2,3,2)$, 
we get only the cohomology classes which contain ``reachable" generators $\mathcal{O}_1$ and $\mathcal{O}_3$: 
\begin{itemize}
\item In dimension 1 (0 in the trace space): 1 generator $\emptyset$ (corresponding to the empty chain of obstacles)
    \item In dimension 2 (1 in the trace space): 2 generators $c'_1$, $c'_3$,
\item In dimension 3 (2 in the trace space): 1 generator $c'_1 \smile c'_3$
\end{itemize}

Indeed, the trace space between $u$ and $v$ can be shown to be homotopy equivalent to 
$S^1 \times S^1$, hence the resulting cohomology ring is the tensor product of 
$(R,R)$ with itself, giving $(R,R^2, R)$. 

We also note that the cohomology ring for points $u=(2,3,2)$ and $v=(4,4,4)$ is:
\begin{itemize}
\item In dimension 1 (0 in the trace space): 1 generator $\emptyset$ (corresponding to the empty chain of obstacles)
    \item In dimension 2 (1 in the trace space): 1 generator $c''_4$,
\end{itemize}
This corresponds to a circle $S^1$ in the trace space, as expected. 

{Finally, from $u=(1,1,1)$ to $v=(2,2,2)$ we get:}
\begin{itemize}
\item In dimension 1 (0 in the trace space): 1 generator (the empty chain of obstacles)
\item In dimension 2 (1 in the trace space): 2 generators ($c'''_2$ and $c'''_3$)
\end{itemize}
This corresponds to a wedge of circles $S^1$, as expected. 

Let us now exemplify the cohomological operation $\curvearrowright$ on $HM^*[X]^{(2,3,2)}_{(0,0,0)}\times HM^*[X]^{(4,4,4)}_{(2,3,2)}$. 
This gives elements of $HM^*[X]^{(4,4,4)}_{(0,0,0)}$ as follows:
\begin{itemize}
\item In dimension 1: 1 generator $\emptyset=\emptyset \curvearrowright \emptyset$
\item In dimension 2: $c'_1\curvearrowright c''_4=c_1\smile c_4$, $c'_1\curvearrowright c''_3=c_1\smile c_3$
\item In dimension 3: $(c'_1\smile c'_3)\curvearrowright c''_4=c_1\smile c_3\smile c_4$
\end{itemize}

These are still very specific cases in which the reachability relation (between obstacles) determines all cohomology classes. We claim that in more subtle situations, $\curvearrowright$ and $\smile$ exhibit more refined interactions. 
\end{example}

\bibliography{biblio}

\begin{thebibliography}{10}

\bibitem{albert}
Augustin Albert, Jérémy Dubut, and Eric Goubault.
\newblock Homological algebra in abelian framed bicategories: Exact sequences
  and embedding theorems, 2026.

\bibitem{Barychnikov}
Yuliy Baryshnikov.
\newblock Linear obstacles in linear systems, and ways to avoid them.
\newblock {\em Advances in Applied Mathematics}, 151:102579, 2023.

\bibitem{dubut2017}
J\'{e}r\'{e}my Dubut.
\newblock {\em Directed homotopy and homology theories for geometric models of
  true concurrency}.
\newblock PhD thesis, Universit\'{e} Paris-Saclay, 2017.

\bibitem{Dubut}
J{\'{e}}r{\'{e}}my Dubut, Eric Goubault, and Jean Goubault{-}Larrecq.
\newblock Directed homology theories and eilenberg-steenrod axioms.
\newblock {\em Appl. Categorical Struct.}, 25(5):775--807, 2017.

\bibitem{fahrenberg2004directed}
Ulrich Fahrenberg.
\newblock Directed homology.
\newblock {\em Electronic Notes in Theoretical Computer Science}, 100:111--125,
  2004.
\newblock CONCUR 2003: CMCIM and GETCO.

\bibitem{thebook}
Lisbeth Fajstrup, Eric Goubault, Emmanuel Haucourt, Samuel Mimram, and Martin
  Raussen.
\newblock {\em Directed Algebraic Topology and Concurrency}.
\newblock Springer, 2016.

\bibitem{book}
Lisbeth Fajstrup, Eric Goubault, Emmanuel Haucourt, Samuel Mimram, and Martin
  Raussen.
\newblock {\em Directed Algebraic Topology and Concurrency}.
\newblock Springer Publishing Company, Incorporated, 1st edition, 2016.

\bibitem{semi-ab}
Eric Goubault.
\newblock A semi-abelian approach to directed homology.
\newblock {\em Journal of Applied and Computational Topology}, 8(2):271--299,
  2024.

\bibitem{directedpersistencemodule}
Eric Goubault.
\newblock Directed homology and persistence modules.
\newblock {\em J. Appl. Comput. Topol.}, 9(1):3, 2025.

\bibitem{concur92}
Eric Goubault and Thomas~P. Jensen.
\newblock Homology of higher dimensional automata.
\newblock In {\em {CONCUR} '92, Third International Conference on Concurrency
  Theory, Stony Brook, NY, USA, August 24-27, 1992, Proceedings}, pages
  254--268, 1992.

\bibitem{eliot}
Eric Goubault and Eliot Médioni.
\newblock Homological algebra over non-unital rings and algebras, with
  applications to $(\infty, 1)$-categories, 2026.
\newblock In preparation, preprint available upon request.

\bibitem{grandis2004}
Marco Grandis.
\newblock Inequilogical spaces, directed homology and noncommutative geometry.
\newblock {\em Homology, Homotopy and Applications}, 6(1):413--437, 2004.

\bibitem{grandisbook}
Marco Grandis.
\newblock {\em Directed Algebraic Topology: Models of Non-Reversible Worlds}.
\newblock New Mathematical Monographs. Cambridge University Press, 2009.

\bibitem{patchkoria2006}
Alex Patchkoria.
\newblock On exactness of long exact sequences of homology semimodules.
\newblock {\em Journal of Homotopy and Related structures}, 1(1):229--243,
  2006.

\bibitem{RaussenII}
Martin Raussen.
\newblock Simplicial models for trace spaces {II}: {G}eneral higher dimensional
  automata.
\newblock {\em Algebr. Geom. Topol.}, 12(3), 2012.

\bibitem{Whitney}
Hassler Whitney.
\newblock On products in a complex.
\newblock {\em Annals of Mathematics}, 39(2), 1937.

\bibitem{cubechains}
Krzysztof Ziemiański.
\newblock Spaces of directed paths on pre-cubical sets, 2016.

\end{thebibliography}

\end{document}

\typeout{get arXiv to do 4 passes: Label(s) may have changed. Rerun}